\documentclass[a4paper,12pt]{article}
\usepackage{amssymb,amsthm,amsmath,amsfonts}
\usepackage{graphicx}
\usepackage{cite}
\usepackage{color}

\theoremstyle{definition}
\newtheorem{theorem}{Theorem}

\newtheorem{corollary}[theorem]{Corollary}
\newtheorem{proposition}[theorem]{Proposition}
\newtheorem{definition}[theorem]{Definition}

\theoremstyle{remark}
\newtheorem{remark}{Remark}

\hyphenation{ope-ra-tor na-tu-ral pro-per-ties em-be-dded Mo-dels di-ffe-ren-tia-tion struc-tu- ral}

\newcommand{\bB}{{\mathbb{B}}}

\newcommand{\bN}{{\mathbb{N}}}

\newcommand{\bR}{{\mathbb{R}}}

\DeclareMathOperator{\spn}{span}



\hyphenation{hyper-ho-lo-mor-phic}
\begin{document}

\title{The Borel-Pompieu formula involving proportional fractional $\psi$-Cauchy-Riemann operators}
\small{
\author
{Jos\'e Oscar Gonz\'alez-Cervantes$^{(1)}$, Isidro Paulino-Basurto$^{(1)}$\\ and\\ Juan Bory-Reyes$^{(2)\footnote{corresponding author}}$}
\vskip 1truecm
\date{\small $^{(1)}$ Departamento de Matem\'aticas, ESFM-Instituto Polit\'ecnico Nacional. 07338, Ciudad M\'exico, M\'exico\\ Email: jogc200678@gmail.com, isidroo-@hotmail.com\\$^{(2)}$ {SEPI, ESIME-Zacatenco-Instituto Polit\'ecnico Nacional. 07338, Ciudad M\'exico, M\'exico}\\Email: juanboryreyes@yahoo.com
}}

\maketitle
\begin{abstract}
We prove an analog of the quaternionic Borel-Pompieu formula in the sense of proportional fractional $\psi$-Cauchy-Riemann operators via Riemann-Liouville derivative with respect to another function. 
\end{abstract}

\noindent
\textbf{Keywords:} Borel-Pompieu formula, $\psi$-Cauchy-Riemann operators, Proportional fractional integrals and derivatives, Fractional calculus with respect to functions.\\
\textbf{AMS Subject Classification (2020):} 26A33; 30G30; 30G35; 45P05.

\section{Introduction} 
The main purpose of this paper is to bring together the proportional fractional calculus with respect to another function and the theory of $\psi$-hyperholomorphic functions in the sense of $\psi$-Cauchy-Riemann operators.

Quaternionic analysis in $\mathbb R^3$ is described as a theory of quaternionic algebra-valued functions of three real variables based on the null-solutions of the so-called $\psi$-Moisil-Theodorescu (or $\psi$-Dirac) operator (a generalization of the complex holomorphic functions) offering a refinement of classical harmonic analysis, where $\psi$ is a structural set different from the standard basis vectors of $\mathbb R^3$ \cite{MS, S, SV1, SV2, VS}. 

In recent decades, a number of papers \cite{AP, BGL, GN1, GN2, GN3, GNL, JSK, Ng, MP, Pi} appeared concerning a $\psi$-hyperholomorphic functions theory in the sense of $\psi$-Cauchy-Riemann operators. 

Fractional calculus, which is engaged with derivatives and integrals of non-integer order, has emerged as important area of investigation in several branches of science and engineering, see \cite{KST, OS, O, P, MR, SKM}. In particular, the interest for  using the notion of proportional derivatives of a function with respect to another function, is addressed in \cite{A, KST, JARH, JAAl, JAA, JUAB, Na, NAAA, OFDF} and the references given there. 

The development of a fractional hyperholomorphic function theory (for functions with values in the quaternions or more in general in a Clifford algebra) is a recent field of research, see \cite{GB-1, GB-2, GB-3, GB-4, CTOP, DM, FRV, KV, PBBB, V, ZT} and the references therein.

Inspired by the mentioned works, which are not intended to be a full references list on the addressed themes, this paper aims to establish a new proportional fractional $\psi$-Cauchy-Riemann type operator calculus with respect to an $\mathbb R$-valued function via Riemann-Liouville fractional derivative in quaternionic analysis context. A useful tool proven here is a new quaternionic Borel-Pompieu type formula that involves proportional fractional $\psi$-Cauchy-Riemann operators with respect to $\mathbb R$-valued functions. Our approach will be different from the works which has been done in \cite{GB-1, GB-2, GB-3, GB-4}.

After this brief introduction, the present work proceeds as follows: Preliminary's section contains a summary of basic facts about quaternionic analysis, proportional fractional derivatives and their main properties. Section 3 deals with the notion of
proportional fractional $\psi$-hyperholomorphic functions and some results. In Section 4 the major achievements are stated and proved.

\section{Preliminaries} 
In this section we in brief present a summary of basic concepts of fractional proportional operators as well as rudiments of quaternionic analysis employed throughout the work.
\subsection{Proportional fractional derivatives with respect to functions}
Given $0<\rho< 1$ and $f\in C^1(\mathbb R, \mathbb R)$ then the proportional derivative  and integral of $f$ of proportional associated to  $\rho\in [0,1]$ are, respectively, 
\begin{align*} (D^{\rho}f )(t) = & (1-\rho)f (t) +  \rho f'(t), \\
({}_aI^{1-\rho} f) (t) = &\frac{1}{\rho}\int_
a^t  e^{ \frac{\rho-1}{\rho}(t-s)}f (s) ds, \nonumber  
 \end{align*}
where ${}_aI^{0}f (t) = f (t)$. In addition, if $\varphi\in C^1(\mathbb R, \mathbb R^+ )$, where $\mathbb R^+$ denotes the set of all real positive numbers, then 
\begin{align*} 
(D^{\rho,\varphi} f )(t) = (1-\rho)f(t) + \rho\frac{f'(t)}{\varphi'(t)}
\end{align*}
is proportional derivative of $f\in C^1(\mathbb R,\mathbb R)$ with respect to $\varphi$.  The proportional integral of $f$  with respect to $\varphi$  and order $n \in \mathbb N$ is considered similarly:
\begin{align*}
 ({}_a I^{n,\rho, \varphi} f) (t) = \frac{1}{\rho^n \Gamma(n)}
 \int_a^t
e^{\frac{\rho-1}{\rho} (\varphi(t)-  \varphi(\tau)) }
(\varphi(t)- \varphi(\tau))
^{n-1}f (\tau)\varphi'(\tau) d\tau.
\end{align*}
From direct computations we see that
\begin{align*} 
\frac{\rho}{\varphi'(t)}\frac{\partial }{\partial t}(e^{\varphi(t) \rho^{-1}(1-\rho)   }f(t)) = & e^{\varphi(t) \rho^{-1}(1-\rho)  }\left(  (1-\rho)f(t) +\rho  \frac{f'(t)}{\varphi'(t) }  \right)  \nonumber \\
=& e^{\varphi(t) \rho^{-1}(1-\rho)  } (D^{\rho,\varphi} f )(t).
\end{align*}

Set $\alpha\in \mathbb C$, with $0< \Re \alpha <1 $.  The left and the right fractional  proportional integrals with respect to $\varphi$, of order $\alpha$ are defined by  
\begin{align*}  ({}_a I^{\alpha,\rho,\varphi} f) (t) := &\frac{1}{\rho^\alpha \Gamma(\alpha)} \int_a^te^{\frac{\rho-1}{\rho} (\varphi(t)-\varphi(\tau)) }
(\varphi(t)- \varphi(\tau))
^{\alpha-1}f (\tau)\varphi'(\tau) d\tau , \nonumber \\
(I_b^{\alpha,\rho,\varphi} f)(t) :=& \frac{1}{\rho^\alpha \Gamma(\alpha)}
 \int_ t^b
e^{\frac{\rho-1}{\rho} (\varphi(\tau)- \varphi(t)) }
(\varphi (\tau)-  \varphi (t))
^{\alpha-1}f (\tau)\varphi'(\tau) d\tau, 
\end{align*}
respectively. Meanwhile, the left and the right fractional proportional derivatives with respect to $\varphi$ are
\begin{align}\label{DerFracPropRespecFuntL}
({}_aD^{\alpha, \rho,\varphi} f)(t) :=  D^{\rho,\varphi}
{}_aI^{1-\alpha,\rho,\varphi}f(t) \quad \textrm{and} \quad
(D_b^{\alpha, \rho,\varphi} f)(t) := D^{\rho,\varphi}
 I_b^{1-\alpha,\rho,\varphi}f(t).
\end{align}
An important property of these fractional differential and integral operators is contained in the so-called the fundamental theorem:
\begin{align}\label{FundTheoFPW}
{}_aD^{\alpha,\rho,\varphi} \circ 
{}_aI^{\alpha,\rho,\varphi} f(t) = f(t) \quad \textrm{and} \quad
 D_b^{\alpha,\rho,\varphi} \circ 
 I_b^{\alpha,\rho,\varphi} f(t) = f(t).
\end{align}

An important special case is when $\varphi(t)$ is the identity function, denoted by $I(t)$, and $\rho=1$ to have 
\begin{align*} ({}_a I^{\alpha,1,I} f) (t)  = 
&\frac{1}{  \Gamma(\alpha)} \int_a^t 
( t-\tau )
^{\alpha-1}f (\tau)  d\tau =: ({}_a I^{\alpha} f) (t),
\end{align*}
which is the well-known left-fractional integral of $f$ in the Riemann-Liouville sense and, similarly, 
$
(I_b^{\alpha,1,I} f)(t) = (I_b^{\alpha} f)(t)$. What is more, 
\begin{align*}
({}_aD^{\alpha, 1,I} f)(t)  =  D^{1,I}
{}_aI^{1-\alpha,1,I}f(t) = \frac{\partial}{\partial t}{}_aI^{1-\alpha}f(t)=:
({}_aD^{\alpha} f)(t), 
\end{align*}
is the left-fractional partial of $f$ of order $\alpha$ and 
$(D_b^{\alpha, 1,I} f)(t)= (D_b^{\alpha} f)(t) $ is the right
left-fractional partial of $f$ of order $\alpha$, both in the Riemann-Liouville sense.

\subsection{On $\psi$-Cauchy-Riemann operator calculus in $\mathbb R^3$}
Let $\mathbb H$ denotes the skew-field of quaternions whose standard  basis is $\{{\bf e_0}, {\bf e_1}, {\bf e_2}, {\bf e_3}\}$, ${\bf e_0}$ is the unit and $\{{\bf e_1}, {\bf e_2}, {\bf e_3}\}$ are called imaginary units.

Any $q\in \mathbb H$ is given by
$$q=x_0 {\bf e_0}+x_{1} {\bf e_1}+x_{2} {\bf e_2}+x_{3} {\bf e_3},$$ 
where $x_{k}\in \mathbb R, k= 0,1,2,3$. The sum is between coefficients of basic elements and the multiplication obeys the following rules
$$ {\bf e_1}^{2}= {\bf e_2}^{2}= {\bf e_3}^{2}=-{\bf e_0},$$
$$ {\bf e_1}\, {\bf e_2}=- {\bf e_2}\,\  {\bf e_1}= {\bf e_3};\   {\bf e_2}\, {\bf e_3}=- {\bf e_3}\, {\bf e_2}= {\bf e_1};\  {\bf e_3}\, {\bf e_1}=- {\bf e_1}\, {\bf e_3}= {\bf e_2}.$$ 
The real span of $\{\bf e_0, \bf e_1, \bf e_2\}$, denoted by $\mathcal A:=\spn_{\mathbb R}\{\bf e_0, \bf e_1, \bf e_2\}$, forms a real subset of $\mathbb H$ but not a sub algebra.

The main involution in $\mathbb H$, called quaternionic conjugation, is given by 
$$q\rightarrow {\overline q}:=x_0 {\bf e_0}-x_{1}{\bf e_1}-x_{2}{\bf e_2}-x_{3}{\bf e_3}$$ 
and the norm can be introduced as 
$$|q|  := \sqrt{ x^{2}_{0}+x^{2}_{1}+x^{2}_{2}+x^{2}_{3}}= \sqrt{ q\,{\overline q}}=\sqrt{{\overline q}\,q}.$$
Given  $u, v\in\mathbb H$ its scalar product is 
$$\langle u, v\rangle:=\frac{1}{2}(\bar u v + \bar v u).$$

An ordered set $\psi:=\{\psi_0, \psi_1,\psi_2\}\subset \mathcal A$ which satisfies the orthonormality condition  
$$\psi_k{\overline \psi}_s+{\overline\psi}_s\psi_k=\langle \psi_k, \psi_s\rangle =2\delta_{k,s},$$ 
for $k,s=0,1,2$, where $\delta_{k,s}$ is the Kronecker's symbol is called $\mathcal A$-structural set.

The real three-dimensional vector space $\mathbb R^3$ will be embedded in $\mathbb H$ by identifying the element $x= (x_0,x_1,x_2)\in \mathbb R^3$ with $x_0\psi_0 + x_1\psi_1+ x_2 \psi_2\in \mathcal A$.  

Let $\Omega\subset\mathbb R^3 \cong \mathcal A$ be an open bounded domain with boundary $\partial \Omega$ a smooth surface and by $\overline\Omega$ we mean its closure.  All $\mathcal A$-valued functions $f$ defined on $\Omega$ have the form $f=\sum_{k=0}^2 f_k \psi_k$, with $f_k:\Omega\to \mathbb R$, for $ k= 0,1,2$. The function $f$ is said to be continuously differentiable if every $f_k$ has the property. As usual, the corresponding space is denoted by $C^{1}(\Omega, \mathcal A)$.
  
We define the left and the right-$\psi$-Cauchy-Riemann operators acting on $f \in C^1(\Omega,\mathcal A)$ respectively as follows    
$${}^{{\psi}}\overline{\partial}f := \sum_{k=0}^2 \psi_k \partial_k f$$ 
and 
$${}^{{\psi}}\overline{\partial}_r f :=  \sum_{k=0}^2 \partial_k f \psi_k,$$ 
where we are using the partial differentiation operator $\partial_k f :=\displaystyle \frac{\partial f}{\partial x_k}$ for all $k$. 

These operators decompose the three-dimensional Laplace operator $\Delta_{\mathbb R^3}$ according to the real components of $x$. Indeed 
$${}^{{\psi}}\overline{\partial}\circ {}^{\overline{\psi}}\overline{\partial}={}^{\overline{\psi}}\overline{\partial} \circ {}^{{\psi}}\overline{\partial} = {}^{{\psi}}\overline{\partial}_r\circ {}^{\overline{\psi}}\overline{\partial}_r={}^{\overline{\psi}}\overline{\partial}_r \circ {}^{{\psi}}\overline{\partial}_r =\bigtriangleup_{\mathbb R^3}.$$

A function $f$ is say to be left-$\psi$-hyperholomorphic (right-$\psi$-hyperholomorphic) on $\Omega$ if ${}^{{\psi}}\overline{\partial}[f]=0$ (${}^{{\psi}}\overline{\partial}_r[f]=0$) on $\Omega$.

The quaternionic differential form: 
$$\displaystyle  {}^{\psi}\eta_{x}:= \sum_{k=0}^2 (-1)^k \psi_k d\hat{x}_k$$ 
is associated to the $2$-dimensional  area differential in $\mathbb  R^3$, where $d\hat{x}_i  = dx_0 \wedge dx_1 \wedge dx_2 $ omitting  $dx_i$ and $ |\eta_{x}| = dx$ is the differential of $2$-dimensional  area. 

Given $f,g  \in C^1(\Omega,\mathcal A)$,  the differential Stokes theorem:
\begin{align*}
d(g{}^{\psi}\eta_x f)  = \left(g {}^{\psi}\overline{\partial} [f] + {}^{\psi}\overline{\partial}_r [g] f\right)dx,
\end{align*}
can be obtained from a direct computation as we see below: 
	\begin{align*}
d(g{}^{\psi}\eta_x f) = &   \sum_{k,l,m} d(g_k (-1)^l d\hat{x}_l  f_m) \psi_k\psi_l\psi_m \\
=&  \sum_{k,l,m} d(g_k   f_m) (-1)^l d\hat{x}_l \psi_k\psi_l\psi_m \\
=&  \sum_{k,l,m}  \left(\frac{\partial g_k}{\partial x_l}   f_m  +  g_k \frac{\partial f_m}{\partial x_l}  \right) (-1)^l dx_l d\hat{x}_l \psi_k\psi_l\psi_m \\
=&  \sum_{k,l,m}  \left(\frac{\partial g_k}{\partial x_l}   f_m  +  g_k \frac{\partial f_m}{\partial x_l}  \right) dx \psi_k\psi_l\psi_m .
\end{align*}
Thus the integral Stokes theorem induced by ${}^{\psi}\overline{\partial}$ and ${}^{\psi}\overline{\partial}_r$ can be state as follow
\begin{align}\label{StokesT}
\int_{\partial \Omega} g{}^{\psi}\eta_x f =  \int_{\Omega} \left[g({}^{\psi}\overline{\partial}  f)  + ({}^{\psi}\overline{\partial}_r  g) f\right]dx.
\end{align}
The ${\psi}$-hyperholomorphic Cauchy-Riemann kernel is given by
\[K_{\psi}(x) = \dfrac{x}{4\pi |x|^{3}},\]
for all $  x=x_0+\psi_1x_1+\psi_2x_2 \in\bR^{3}\setminus  \{0\}$ and, directly, one can verify its basic properties: $K_{\psi}\in C^{\infty} (\mathbb  R^3\setminus \{ 0\})$ and  ${}^{\psi}\overline{\partial}_{r} K_{\psi} =  0 $ on $\mathbb  R^3\setminus \{  0\}$. What is more, if  $f,g\in C^{1}(\overline{\Omega},\mathcal A)$ the induced Borel-Pompieu formula holds:  
\begin{align}\label{BP}
&\int_{\partial\Omega }\left[ K_{\psi}(\tau -x){}^{\psi}\eta_{\tau} f(\tau)+g(\tau) {}^{\psi} \eta_{\tau} K_{\psi}(\tau -x)\right] \nonumber \\
&-\int_{\Omega }\left[ K_{\psi}(y-x) ({}^{\psi}\overline{\partial}f)(y)+({}^{\psi}\overline{\partial}_{r}g)(y) K_{\psi}(y-x)\right] dy \nonumber  \\
=&
\begin{cases}
f(x)+g(x), & x\in\Omega 
\\
0, & x\in\bR^{3}\setminus\overline{\Omega},
\end{cases}
\end{align} 
Let $x=\displaystyle\sum_{k=0}^{2}\psi_k x_k \in \mathbb R^3 \setminus \Omega$, then combining the properties of $K_{\psi}$ and integral Stokes theorem \eqref{StokesT}, the previous formula follows. 

On the other hand, since for $x\in\Omega $ there exists $N\in\bN$ such that $\bB (x,\displaystyle\frac{1}{n})\subset\Omega $ for all  $n>N$, integral  Stokes theorem allows to see that 
\begin{align*}
&\int_{\partial (\Omega  \setminus\bB (x, \frac{1}{n}))} K_{\psi}(\tau -x) {}^{\psi}\eta_{\tau} f(\tau)  \\
&=  \int_{\Omega  \setminus\bB (x, \frac{1}{n})} \left[K_{\psi}(y-x) ({}^{\psi}\overline{\partial}f)(y) + ({}^{\psi}\overline{\partial}_{r}K_{\psi}(y-x))f(y)\right]dy ,
\end{align*}
or equivalently 
\begin{align*}
&\int_{\partial \Omega } K_{\psi}(\tau -x) {}^{\psi}\eta_{\tau} f(\tau) -
\int_{ \bB (x, \frac{1}{n}) } K_{\psi}(\tau -x) {}^{\psi}\eta_{\tau} f(\tau) \\
& =  \int_{\Omega  \setminus\bB (x, \frac{1}{n})}  K_{\psi}(y-x) {}^{\psi}(\overline{\partial}f)(y) dy .
\end{align*}
The Stokes theorem gives us that 
 \begin{align*}
\int_{\partial\bB (x, \frac{1}{n})} \overline{(\tau -x)}{}^{\psi}\eta_{\tau}  =  4\pi\dfrac{1}{n^{3}}.
\end{align*}
and the inequality:
\begin{align*}
&|\int_{\partial\bB (x, \frac{1}{n})} K_{\psi}(\tau -x){}^{\psi}\eta_{\tau} f(\tau) - f(x)| = |\dfrac{n^{3}}{4\pi}\int_{\partial\bB (x, \frac{1}{n})} \overline{(\tau -x)}{}^{\psi} \eta_{\tau} (f(\tau)-f(x))|  \\
&\leq \dfrac{n^{3}}{4\pi}\int_{\partial\bB (x, \frac{1}{n})}|\overline{\tau -x}| \, |{}^{\psi} \eta_{\tau}| \, |f(\tau )-f(x)| = \dfrac{n^{2}}{4\pi}\int_{\partial\bB (x, \frac{1}{n})}|{}^{\psi} \eta_{\tau}| \, |f(\tau )-f(x)|    
\end{align*}
allows to see that
\[\lim_{n\to \infty}
  \int_{\partial\bB (x, \frac{1}{n})} K_{\psi }(\tau -x){}^{\psi}\eta_{\tau}f(\tau )=f(x),\]
which shows  
\[ \int_{\partial\Omega } K_{\psi}(\tau -x){}^{\psi}\eta_{\tau} f(\tau)  -\int_{\Omega } K_{\psi}(y-x)({}^{\psi}\overline{\partial}f)(y) dy = f(x).\]
A similar identity is obtained for $g$ and ${}^{\psi}\overline{\partial}_rg$ and the main result is a sum of the previous two.

\section{Fractional proportional $\psi$-Cauchy-Riemann operators with respect to another function} 
Let $a:=\sum_{k=0}^2\psi_k a_k ,b:=\sum_{k=0}^2\psi_k b_k \in\mathbb R^3$ such that $a_k< b_k$, for all $k$. We introduce the following domain to be used mainly in the rest of the paper
$${J_a^b }:=  \{  \sum_{k=0}^2\psi_k x_k \in \mathbb R^3 \ \mid \ a_k< x_k < b_k, \  \ k=0,1,2\}= (a_0,b_0) \times (a_1,b_1) \times (a_2,b_2).$$ 
We temporary fix the element $y= \sum_{k=0}^2\psi_k y_k$ in $J_a^b$.

To define a fractional proportional $\psi$-Cauchy-Riemann operator with respect to $\mathbb R$-valued functions defined on ${J_a^b }$, let $\sigma:=\sum_{k=0}^2\psi_k \sigma_k $ and  ${\alpha}:= (\alpha_0, \alpha_1,\alpha_2), {\beta}:=(\beta_0, \beta_1, \beta_2) \in \mathbb C^3$ such that $0< \Re \alpha_k, \Re\beta_k$ and $\sigma_k < 1$ for all $k$. 

Let $\varphi\in C^1(\overline{J_a^b},\mathbb R)$ such that $\displaystyle\frac{\partial }{\partial x_i} \varphi(y_0, \dots, x_i, \dots y_2) > 0$ for all $x_i \in [a_i,b_i], i=0,1,2$. For abbreviation, let $D\varphi(x,y)$ stand for $\sum_{i=0}^2\displaystyle\frac{\partial }{\partial x_i} \varphi(y_0, \dots, x_i, \dots y_2)$. 

The real linear space $AC^1(J_a^b,\mathcal A)$ consists of functions ${f}=\sum_{i=0}^2\psi_i f_i$, where $f_i: J_a^b\to \mathbb R$ are required to be such that the mapping 
$$x_j \mapsto f_i(y_0,\dots,x_j,\dots, y_2)$$  
belongs to $AC^1((a_j, b_j), \mathbb R)$ for all $y\in J_a^b, i, j=0,1,2,$.  
 
\begin{definition} 
The left fractional proportional integral of $ f \in  AC^1(J_a^b,\mathcal A)  $ with respect to $\varphi$ with order $\alpha$ and proportion related to $\sigma$  is defined by
\begin{align*}
({}_a {  I}^{\alpha,\sigma,\varphi} f) (x,y): = &\sum_{i=0}^2 ({}_{a_i} I^{\alpha_i,\sigma_i,\varphi_i} f) (y_0, \dots, x_i ,\dots, y_2), 
\end{align*}
where $\varphi_i(x_i):= \varphi(y_0, \dots, x_i ,\dots, y_2)$ for all $x_i \in [a_i,b_i]$ for $i=0,1,2$. The right fractional proportional integral is     
  \begin{align*}
 ( {I}_b^{\alpha,\sigma,\varphi} f) (x,y) :=  \sum _{i=0}^2 (  I_{b_i}^{\alpha_i,\sigma_i,\varphi_i} f)(y_0, \dots, x_i ,\dots, y_3) . 
 \end{align*}
In addition, the quaternionic  left  and   right   fractional proportional $\psi$-Cauchy-Riemann operators  with respect to  $\varphi$ with order $\alpha$ and proportion related to $\sigma$ are given by    
\begin{align*}
({}^{\psi}_a{\overline{\partial}}^{\alpha, \sigma, \varphi }f)(x,y):= & (1-\sigma)  ({}_a{  I}^{1-\alpha, \sigma, \varphi}f)(x,y)  +
 \sigma \frac{{}^{\psi}\overline{\partial} ({}_a{I}^{1-\alpha,  \sigma, \varphi}  f )  (x,y)}{ D\varphi(x,y) }    
\end{align*} 
 and 
\begin{align*}
( {}^{\psi} \overline{\partial}_b^{\alpha, \sigma, \varphi }   f)  (x,y)  
:=  &
  (1-\sigma)  ( {I}_b^{1-\alpha,  \sigma, \varphi}  f )  (x,y) +
     \sigma \frac{ {}^{\psi}\overline{\partial} ({I}_b^{1-\alpha,  \sigma, \varphi}  f )  (x,y)}{ D\varphi(x,y) } ,   
\end{align*} 
respectively, where $1-\alpha := ( 1-\alpha_0, 1-\alpha_1, 1-\alpha_2 )\in \mathbb C^3 $ and the partials derivatives of ${}^{\psi}\overline{\partial}$ are with respect to real components of $x$.   

The right versions of the previous operators are the following: 
\begin{align*}
 ( {}^{\psi}_a\overline{\partial}_r^{\alpha, \sigma, \varphi } f)  (x,y)  =  & 
   ({}_a{ I}^{1-\alpha,  \sigma, \varphi}  f )  (x,y)   (1-\sigma)   +
\frac{    {}^{\psi}\overline{\partial}_r ({}_a{I}^{1-\alpha,  \sigma, \varphi}  f )  (x,q)  }{ D\varphi(x,y) }  \sigma, \\
 (  {}^{\psi}\overline{\partial}_{r,b}^{\alpha, \sigma, \varphi }    f )  (x,y)  =   & 
    ( { I}_b^{1-\alpha,  \sigma, \varphi}  f )  (x,y)   (1-\sigma) +
\frac{     {}^{\psi}\overline{\partial}_r ({ I}_b^{1-\alpha,  \sigma, \varphi}  f )  (x,y) 
 }{ D\varphi(x,y) }\sigma.   
\end{align*} 
In particular, for $\sigma=\sum_{k=0}^2\psi_k$ and  $\varphi(x) = \sum_{k=0}^2 x_k$, for all $x=\sum_{k=0}^2\psi_k x_k\in \mathbb R^3$ with $x_0,x_1,x_2\in \mathbb R$, we see that  
\begin{align*}
 ({}_a {I}^{\alpha,\sigma,\varphi} f) (x,y) = &\sum _{i=0}^2 ({}_{a_i} I^{\alpha_i} f) (y_0, \dots, x_i ,\dots, y_2) =: ({}_a {  I}^{\alpha} f) (x,y)  , \\
 ( {I}_b^{\alpha,\sigma,\varphi} f) (x,y) = & \sum _{i=0}^2 (  I_{b_i}^{\alpha_i} f)(y_0, \dots, x_i ,\dots, y_3) =: ( {I}_b^{\alpha} f) (x,y),
\end{align*}
\begin{align*}
{}^{\psi}_a{\overline{\partial}}^{\alpha, \sigma, \varphi }   f   (x,y)  =& 
           {}^{\psi}\overline{\partial} ({}_a{I}^{1-\alpha}  f )  (x,y) =: {}^{\psi}_a{\overline{\partial}}^{\alpha } f(x,y), \\
 {}^{\psi} \overline{\partial}_b^{\alpha, \sigma, \varphi }    f   (x,y)=&
 {}^{\psi}\overline{\partial} ({I}_b^{1-\alpha}  f )  (x,y)=:
{}^{\psi} \overline{\partial}_b^{\alpha} f (x,y) ,   
\end{align*}
are the quaternionic fractional integral and derivatives of $f$ in the Riemann-Liouville sense. 

The operators ${}^{\psi}_a \overline{\partial}^{\alpha }$ and ${}^{\psi} \overline{\partial}_b^{\alpha }$ are the fractional ${\psi}$-Cauchy-Riemann operators since 
\begin{align*}
{}^{\psi}_a{\overline{\partial}}^{\alpha, \sigma, \varphi }    f   (x,y)  =& 
\sum_{i=0}^2   
\psi_i ( {}_{a_i}D^{\alpha_i}f)  (y_0,\dots, x_i,\dots, y_2) \\
 = &\sum_{k=0}^3\psi_i \psi_k
\sum_{i=0}^2   
( {}_{a_i}D^{\alpha_i}f_k)  (y_0,\dots, x_i,\dots, y_2)  
,\\ 
{}^{\psi}{\overline{\partial}}_b^{\alpha, \sigma, \varphi }    f   (x,y)  =& 
\sum_{i=0}^2   
 \psi_i  (D^{\alpha_i}_{b_i}f)  (y_0,\dots, x_i,\dots, y_2) \\
	= & \sum_{k=0}^3\psi_i  \psi_k
\sum_{i=0}^2   
( D^{\alpha_i}_{b_i}f_k)  (y_0,\dots, x_i,\dots, y_2),  
\end{align*}
where $f=\sum_{k=0}^2\psi_k f_k$ with $f_0,f_1,f_2$ are taken to be $\mathbb R$-valued functions. Here and subsequently ${}_{a_i}D^{\alpha_i}f_k$ ($D^{\alpha_i}_{b_i}f_k$) stands for the left (respectively right) fractional derivatives of $f_k$ with respect the real component $x_k$ of $x$ for all $i, k=0,1,2$.
\end{definition}

\begin{definition}  Let us fix $\alpha,\sigma, \varphi$, an structural set $\psi$ and a domain $J_a^b$. Assume  $f \in AC^1(J_a^b,\mathcal A)$ be such that $x\mapsto ({}_a{ I}^{\alpha,\sigma, \varphi}f)(x,y)$ belong  to $ C^1(\overline{J_a^b}, \mathcal A)$ for all $y\in J_a^b$. 
\begin{enumerate} 
\item We call $f$ left-$\alpha$-$\varphi$-fractional $\sigma$-proportional $\psi$-hyperholomorphic function on  ${J_a^b }$ ($\alpha$-$\varphi$-$\sigma$-$\psi$-hyperholomorphic function on  ${J_a^b }$, for brevity) if
$$({}^{\psi}_a{\overline{\partial}}^{\alpha,\sigma, \varphi}  f )(x,y)  =0,$$  
for all $x,y\in J_a^b$. 
\item We say that $f$ is right-$\alpha$-$\varphi$-fractional $\sigma$-proportional $\psi$-hyperholomorphic function on  ${J_a^b }$ (r-$\alpha$-$\varphi$-$\sigma$-$\psi$-hyperholomorphic function on ${J_a^b}$, for short) if.
$$({}^{\psi}_a{\overline{\partial}}_r^{\alpha,\sigma, \varphi}f)(x,y),$$  
for all $x,y\in J_a^b$. 
\end{enumerate}
\end{definition}
We will denote by ${}^{\psi}_a{{\mathcal M}}^{\alpha,\sigma, \varphi} (J_a^b)$ (respectively ${}^{\psi}_a{{ \mathcal M}}_r^{\alpha,\sigma, \varphi} (J_a^b)$) the set of all $\alpha$-$\varphi$-fractional $\sigma$-proportional $\psi$-hyperholomorphic functions on ${J_a^b }$ (respectively the set of all r-$\alpha$-$\varphi$-$\sigma$-$\psi$-hyperholomorphic functions on  ${J_a^b}$). Any class of $\alpha$-$\varphi$-fractional $\sigma$-proportional $\psi$-hyperholomorphic functions (r-$\alpha$-$\varphi$-$\sigma$-$\psi$-hyperholomorphic functions) is right (respectively, left) module of $C^1(J_a^b,\mathcal A)$.
\begin{remark} With the notations and definitions introduced previously, let $f\in {}^{\psi}_a{{\mathcal M}}^{\alpha,\sigma, \varphi} (J_a^b)$ then  
 $$ {}^{\psi}\overline{\partial} ({}_a{I}^{1-\alpha,  \sigma, \varphi}  f )  (x,y) + D\varphi(x,y) \sigma^{-1} (1-\sigma) \
   ({}_a{  I}^{1-\alpha,  \sigma, \varphi}  f )  (x,y) =0.$$ 
In fact, ${}_a{I}^{1-\alpha,  \sigma, \varphi} f(x,y)$ is a solution of the perturbed $\psi$-Cauchy-Riemann operator  
$${}^{\psi}\overline{\partial} + D\varphi(x,y) \sigma^{-1} (1-\sigma).$$   

Using  the quaternionic conjugation  we see that 
\begin{align*}
\overline{( {}^{\psi}_a{\overline{\partial}}^{\alpha, \sigma, \varphi }    f )  (x,y) }  = &  
      ({}_a{  I}^{1-\alpha,  \sigma, \varphi}  {\bar f} ) (1-\overline{\sigma} )  (x,y)  +
     \frac{        {}^{\bar \psi}\overline{\partial}_r ({}_a{I}^{1-\alpha,  \sigma, \varphi} \bar{ f} )  (x,y)}{ D\varphi(x,y) } \overline{ \sigma}    ,\end{align*} 
Therefore, 
 $ f\in   {}^{\psi}_a{{\mathcal M}}^{\alpha,\sigma, \varphi} (J_a^b)$ if and only if 
$$      ({}_a{  I}^{1-\alpha,  \sigma, \varphi}  {\bar f} ) (1-\overline{\sigma} )  (x,y)  +
     \frac{        {}^{\bar \psi}\overline{\partial}_r ({}_a{I}^{1-\alpha,  \sigma, \varphi} \bar{ f} )  (x,y)}{ D\varphi(x,y) } \overline{ \sigma} =0, $$
for all $x,y\in J_a^b$.
\end{remark}

\begin{proposition}\label{propFRACD}
Let $f \in AC^1(J_a^b,\mathbb H)$ and ${\alpha} = (\alpha_0, \alpha_1,\alpha_2) \in\mathbb C^3$ with $0< \Re \alpha_{i} <1$ for $i=0,1,2$. Then 
$$\displaystyle {}^{\bar \psi}\overline{\partial}_x \circ( {}^{\psi}\overline{\partial}_a^{{\alpha}} f) (x,y) = 
	\Delta_{\mathbb R^3} ({}_a{I}^{1-\alpha}f)(x,y).$$
\end{proposition}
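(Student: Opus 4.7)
The plan is to reduce the statement to the Laplacian factorization ${}^{\bar\psi}\overline{\partial}\circ{}^{\psi}\overline{\partial}=\Delta_{\mathbb{R}^3}$ recorded in the Preliminaries, together with the explicit description of ${}^{\psi}\overline{\partial}_a^{\alpha}$ given at the end of the preceding definition. First I would invoke the Riemann-Liouville special-case identity listed there,
$$({}^{\psi}\overline{\partial}_a^{\alpha}f)(x,y) = {}^{\psi}\overline{\partial}\bigl(({}_a I^{1-\alpha}f)(\cdot,y)\bigr)(x),$$
which says that the fractional Cauchy-Riemann operator is nothing more than the ordinary $\psi$-Cauchy-Riemann operator applied to the quaternionic Riemann-Liouville integral.

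Second, I would apply ${}^{\bar\psi}\overline{\partial}_x$ to both sides and invoke the factorization, obtaining
$${}^{\bar\psi}\overline{\partial}_x \circ ({}^{\psi}\overline{\partial}_a^{\alpha}f)(x,y) = \bigl({}^{\bar\psi}\overline{\partial}\circ{}^{\psi}\overline{\partial}\bigr)({}_a I^{1-\alpha}f)(x,y) = \Delta_{\mathbb{R}^3}({}_a I^{1-\alpha}f)(x,y),$$
which is the desired identity. A direct verification is equally available via the alternative form $({}^{\psi}\overline{\partial}_a^{\alpha}f)(x,y) = \sum_k \psi_k ({}_{a_k}D^{\alpha_k}f)(y_0,\ldots,x_k,\ldots,y_2)$ from the same place in the excerpt: one applies $\sum_l \bar\psi_l \partial_l$, uses that the $k$-th summand depends on $x_k$ alone so that $\partial_l({}_{a_k}D^{\alpha_k}f)=0$ when $l\neq k$, and then invokes the identities $\bar\psi_l \psi_l = 1$ and $\bar\psi_l\psi_k+\bar\psi_k\psi_l=0$ (for $l\neq k$) coming from the structural-set relations to collect the surviving terms into $\sum_l \partial_l^2({}_{a_l}I^{1-\alpha_l}f) = \Delta_{\mathbb{R}^3}({}_a I^{1-\alpha}f)$.

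The only real subtlety, and therefore the point I expect to be the main obstacle, is checking that ${}_a I^{1-\alpha}f$ is regular enough for $\Delta_{\mathbb{R}^3}$ to act on it. Since
$${}_a I^{1-\alpha}f(x,y) = \sum_i ({}_{a_i}I^{1-\alpha_i}f)(y_0,\ldots,x_i,\ldots,y_2)$$
is a sum in which each summand depends on a single $x_i$, all mixed partial derivatives vanish automatically and the Laplacian reduces to $\sum_i \partial_i^2$ applied termwise. The second partial $\partial_i^2({}_{a_i}I^{1-\alpha_i}f)$ is legitimate because $f\in AC^1$: the first differentiation produces the Riemann-Liouville derivative ${}_{a_i}D^{\alpha_i}f$ (as recalled in the Preliminaries), and the second can be carried out by differentiating under the integral sign after an integration by parts that is justified by the absolute continuity of $f$ in each real variable.
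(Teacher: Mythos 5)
Your proof is correct and follows essentially the same route as the paper: the paper's own (two-line) argument likewise rewrites $({}^{\psi}\overline{\partial}_a^{\alpha}f)(x,y)$ as ${}^{\psi}\overline{\partial}_x({}_a I^{1-\alpha}f)(x,y)$ via the Riemann--Liouville special case and then applies the factorization ${}^{\bar\psi}\overline{\partial}\circ{}^{\psi}\overline{\partial}=\Delta_{\mathbb{R}^3}$. Your additional direct verification and the regularity discussion go beyond what the paper records, but they do not change the approach.
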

\begin{proof}   
\begin{align*} 
{}^{\bar \psi}\overline{\partial}_x \circ {}^{\psi}\overline{\partial}_a^{   {\alpha}} f (x,y) =
&{}^{\bar \psi}\overline{\partial}_x \circ  {}^{\psi}\overline{\partial}_x ({}_a{I}^{1-\alpha}  f )  (x,y)  = \Delta_{\mathbb R^3} ({}_a{I}^{1-\alpha}  f )  (x,y) .
\end{align*}
\end{proof} 

\begin{remark}  
Taking $f\in {}^{\psi}_a{{\mathcal M}}^{\alpha,\sigma, \varphi} (J_a^b)$ yields  $\Delta_{\mathbb R^3} ({}_a{I}^{1-\alpha}f)(x,y) =0$. 
\end{remark}

Note that the real proportional fractional derivative  $ {}_aD^{\alpha, \rho,\varphi}$  with respect to a  function  given in 
  \eqref{DerFracPropRespecFuntL} is structurally extended to $\mathbb R^3$ by ${}^{\psi}_a{\overline{\partial}}^{\alpha, \sigma, \varphi }$.
	
\begin{proposition} \label{pro2}
Let $\lambda_0, \dots, \lambda_3 \in C^1(\overline{J_a^b },  \mathbb R)$  be such that  
\[\displaystyle \sum_{k=0}^2 \psi_k \frac{\partial {\lambda_k}}{\partial x_k} (x) =  
 D\varphi(x,y)
  \ \sigma^{-1} (1-\sigma).\]  Then 
\begin{align*}
  {}^{\psi}\overline{\partial}[ e^{ \sum_{k=0}^3\lambda_k(x)}  ({}_a{I}^{1-\alpha,\sigma, \varphi}  f )  (x,y)]  =  &   
    D\varphi  (x,y)  \ 
 \sigma^{-1} {}^{\psi}_a{\overline{\partial}}^{\alpha, \sigma, \varphi }[   f]  (x,y)     e^{ \sum_{k=0}^3 \lambda_k(x)} ,\\
  {}^{\psi}\overline{\partial}_r[ e^{ \sum_{k=0}^3\lambda_k(x)}  ({}_a{I}^{1-\alpha,\sigma, \varphi}  f )  (x,y)]  =  & {}^{\psi}_a{\overline{\partial}}_r^{\alpha, \sigma, \varphi }[   f]  (x,y) \sigma^{-1} \   
	D\varphi (x,y )  \
      e^{ \sum_{k=0}^3 \lambda_k(x)} .
  \end{align*} 
\end{proposition}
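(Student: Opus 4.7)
The plan is to establish both identities by applying the product rule for ${}^{\psi}\overline{\partial}$ (respectively ${}^{\psi}\overline{\partial}_r$), using the hypothesis on $\sum_k\psi_k\partial_k\lambda_k$, and then inverting the defining relation to substitute ${}^{\psi}\overline{\partial}({}_aI^{1-\alpha,\sigma,\varphi}f)$ in terms of ${}^{\psi}_a\overline{\partial}^{\alpha,\sigma,\varphi}f$.

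For the first identity I would write $g := ({}_aI^{1-\alpha,\sigma,\varphi}f)(x,y)$ and $E := e^{\sum_k \lambda_k(x)}$. Since $E$ is $\mathbb{R}$-valued it commutes with every quaternion, and under the natural reading of the hypothesis (that each $\lambda_k$ enters with only its $x_k$-derivative), the Leibniz rule for $\partial_k$ gives
\[ {}^{\psi}\overline{\partial}(Eg) \;=\; E\,\Bigl(\sum_{k=0}^{2} \psi_k \partial_k\lambda_k\Bigr)\, g \;+\; E\,{}^{\psi}\overline{\partial}g.\]
Replacing the parenthesized sum by $D\varphi(x,y)\,\sigma^{-1}(1-\sigma)$ according to the hypothesis and then solving the definition of ${}^{\psi}_a\overline{\partial}^{\alpha,\sigma,\varphi}f$ for ${}^{\psi}\overline{\partial}g$, namely
\[ {}^{\psi}\overline{\partial}g \;=\; D\varphi\,\sigma^{-1}\,{}^{\psi}_a\overline{\partial}^{\alpha,\sigma,\varphi}f \;-\; D\varphi\,\sigma^{-1}(1-\sigma)\,g,\]
the two $D\varphi\,\sigma^{-1}(1-\sigma)\,g$ contributions cancel, leaving $E\cdot D\varphi\,\sigma^{-1}\,{}^{\psi}_a\overline{\partial}^{\alpha,\sigma,\varphi}f$. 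Since $E$ is scalar it slides to the right, producing the claimed formula.

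The right-hand identity follows by the parallel computation using ${}^{\psi}\overline{\partial}_r(Eg) = \sum_k \partial_k(Eg)\,\psi_k$ and the definitional form of ${}^{\psi}_a\overline{\partial}_r^{\alpha,\sigma,\varphi}$. Because each $\partial_k\lambda_k$ is real one has $\psi_k\partial_k\lambda_k = (\partial_k\lambda_k)\,\psi_k$, and because $\sigma^{-1}$ commutes with $1-\sigma$, the same hypothesis reads equivalently $\sum_k (\partial_k\lambda_k)\,\psi_k = (1-\sigma)\,\sigma^{-1}\,D\varphi$, which is exactly the form required on the right of the product rule. Solving the right-hand defining identity for ${}^{\psi}\overline{\partial}_r g$ and running the same cancellation yields the second claim.

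The only real obstacle is bookkeeping the non-commutative ordering of the symbols $\sigma$, $\sigma^{-1}$, $\psi_k$ and the two Cauchy-Riemann operators, particularly ensuring that $\sigma^{-1}$ ends up on the correct side when moving between the left and right versions. The scalarity of $E$ and of each $\partial_k\lambda_k$ is the structural fact that makes both the Leibniz step and the final cancellation automatic.
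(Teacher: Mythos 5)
Your proof is correct and follows essentially the same route as the paper: apply the Leibniz rule to ${}^{\psi}\overline{\partial}(Eg)$, use the hypothesis on $\sum_k\psi_k\partial_k\lambda_k$ to replace the derivative of the exponential factor, and then identify the result with the definition of ${}^{\psi}_a\overline{\partial}^{\alpha,\sigma,\varphi}f$ (the paper factors out $D\varphi\,\sigma^{-1}$ where you solve and cancel, but the algebra is identical). Your explicit handling of the right-handed ordering via the commutation of $\sigma^{-1}$ with $1-\sigma$, and your flagging of the implicit assumption that $\partial_k$ of the exponent reduces to $\partial_k\lambda_k$, are both points the paper leaves tacit (it dismisses the second identity with ``in the same manner'').
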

\begin{proof}  
\begin{align*}
 & {}^{\psi}\overline{\partial}[ e^{ \sum_{k=0}^3\lambda_k(x)}  ({}_a{I}^{1-\alpha,\sigma, \varphi}  f )  (x,y)]  =   
		\sum_{k=0}^2{\psi_k}\frac{\partial}{\partial x_k}
		[ e^{ \sum_{k=0}^3\lambda_k(x)}  ({}_a{I}^{1-\alpha,\sigma, \varphi}  f )  (x,y)] \\
= & e^{ \sum_{k=0}^3\lambda_k(x)}
						\left[ 						\sum_{k=0}^2{\psi_k}\frac{\partial \lambda_k(x)}{\partial x_k}  ({}_a{I}^{1-\alpha,\sigma, \varphi}  f )  (x,y) +
		\sum_{k=0}^2{\psi_k}\frac{\partial  }{\partial x_k}
		({}_a{I}^{1-\alpha,\sigma, \varphi}  f )  (x,y)
		\right] \\
		= & e^{ \sum_{k=0}^3\lambda_k(x)}
						\left[ 						 D\varphi(x,y)
  \ \sigma^{-1} (1-\sigma)  ({}_a{I}^{1-\alpha,\sigma, \varphi}  f )  (x,y) +
		{}^{\psi}{\overline{\partial}}
		({}_a{I}^{1-\alpha,\sigma, \varphi}  f )  (x,y)
		\right] \\
		= & e^{ \sum_{k=0}^3\lambda_k(x)}
						 D\varphi(x,y)
  \ \sigma^{-1}	\left[ 					 (1-\sigma)  ({}_a{I}^{1-\alpha,\sigma, \varphi}  f )  (x,y) +
	\sigma	\frac{{}^{\psi}{\overline{\partial}}
		({}_a{I}^{1-\alpha,\sigma, \varphi}  f )  (x,y)}{ D\varphi(x,y)}
		\right] .
 \end{align*}
In the same manner we can prove the second identity. 
\end{proof}

\section{Major Achievements}
This section is devoted to prove a quaternionic Borel-Pompieu type formula involving proportional fractional $\psi$-Cauchy-Riemann operators with respect to $\mathbb R$-valued functions.

Fix $\alpha=(\alpha_0,\alpha_1,\alpha_2), \beta=(\beta_0,\beta_1,\beta_2)\in \mathbb C^3$ and let $\sigma:=\sum_{k=0}^2\psi_k \sigma_k$, $\rho =\sum_{k=0}^2\psi_k \rho_k \in \mathbb R^3$ such that $0<\Re \alpha_k, \Re \beta_k$ and $\sigma_k, \rho_k < 1$ for all $k=0,1,2$. 

Consider $\varphi, \vartheta\in C^1(\overline{J_a^b},\mathbb R)$ with $\varphi_i', \vartheta _i' > 0$ on $ [a_i,b_i]$ for all $i=0,1,2$.

\begin{proposition}\label{STF}  
Let $f,g \in AC^1(J_a^b,\mathcal A)$ in such a way that the mappings 
$$x\mapsto ({}_a{  I}^{1-\alpha,\sigma, \varphi}  f )(x,y), \quad   x\mapsto ({}_a{  I}^{1-\beta,\rho, \vartheta} g)(x,y)$$ 
belong to $C^1(\overline{J_a^b}, \mathcal A)$. Then 
\begin{align*}
 & \int_{\partial J_a^b}   ({}_a{  I}^{1-\beta,\rho,\vartheta}  g )  (x,y) 
    \ {}^\psi\eta_x^{\mu,\lambda}  \  ({}_a{ I}^{1-\alpha,\sigma, \varphi}  f ) 
		(x,y) \\
  =  
   &   \int_{J_a^b} \left[   ({}_a{
	I}^{1-\beta,\rho,\vartheta}  g )  (x,y)   
 \   C_{\varphi, \sigma}(x,y) \ ({}^{\psi}_a{\overline{\partial}}^{\alpha,\sigma,\varphi}  f )(x,y)   \right. \\
& \left.  +  ({}^{\psi}_a{\overline{\partial}}_r^{\beta,\rho,\vartheta}  g )(x,y)  \ C_{r,\vartheta, \rho}(x,y) \ 
({}_a{ I}^{1-\alpha,\sigma,\varphi}  f )  (x,y)
\right]    dx^{\mu,\lambda},
\end{align*}  
where  
\begin{align*}
\displaystyle C_{\varphi, \sigma}(x,y):= & D\varphi(x,y)   \sigma ^{-1} , \  \    
 \displaystyle  C_{r,\vartheta, \rho}(x,y):=  \rho ^{-1} D\vartheta (x,y)   , \\ 
 \displaystyle {}^\psi\eta_x^{\mu, \lambda} := &  e^{ \sum_{k=0}^2 
\mu_k(x)+ \lambda_k(x)}\eta^\psi_x, \  \  \displaystyle 
dx^{\mu,\lambda} := e^{\sum_{k=0}^2
 \mu_k(x)+\lambda_k(x)}dx.
\end{align*}
\end{proposition}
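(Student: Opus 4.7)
The plan is to recognize the claimed identity as a weighted version of the classical integral Stokes theorem \eqref{StokesT}, with the weight supplied by the exponential factors in ${}^\psi\eta_x^{\mu,\lambda}$ and $dx^{\mu,\lambda}$. The bridge between the fractional $\psi$-Cauchy-Riemann operators and the classical ones is Proposition \ref{pro2}, which already exhibits the correct exponential weighting.

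First, I would fix auxiliary real-valued functions $\lambda_0,\lambda_1,\lambda_2,\mu_0,\mu_1,\mu_2\in C^1(\overline{J_a^b},\mathbb R)$ matching the hypothesis of Proposition \ref{pro2}, namely
\[
\sum_{k=0}^{2}\psi_k\frac{\partial \lambda_k}{\partial x_k}(x)=D\varphi(x,y)\,\sigma^{-1}(1-\sigma),\qquad \sum_{k=0}^{2}\psi_k\frac{\partial \mu_k}{\partial x_k}(x)=D\vartheta(x,y)\,\rho^{-1}(1-\rho),
\]
so that the triples $(\lambda_k)$ and $(\mu_k)$ appearing in the statement of the Proposition are the ones used below. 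Next, I would introduce the auxiliary functions
\[
F(x):=e^{\sum_{k=0}^{2}\lambda_k(x)}({}_a{I}^{1-\alpha,\sigma,\varphi}f)(x,y),\qquad G(x):=e^{\sum_{k=0}^{2}\mu_k(x)}({}_a{I}^{1-\beta,\rho,\vartheta}g)(x,y),
\]
which by hypothesis belong to $C^1(\overline{J_a^b},\mathcal A)$.

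The second step is to apply the integral Stokes theorem \eqref{StokesT} to the pair $(G,F)$, yielding
\[
\int_{\partial J_a^b}G\,{}^{\psi}\eta_x\,F=\int_{J_a^b}\bigl[G\,({}^{\psi}\overline{\partial}F)+({}^{\psi}\overline{\partial}_rG)\,F\bigr]\,dx.
\]
Now I would invoke Proposition \ref{pro2} twice: once to rewrite
\[
{}^{\psi}\overline{\partial}F=D\varphi(x,y)\,\sigma^{-1}\,({}^{\psi}_a\overline{\partial}^{\alpha,\sigma,\varphi}f)(x,y)\,e^{\sum_{k}\lambda_k(x)},
\]
and once (in its right-hand version) to rewrite
\[
{}^{\psi}\overline{\partial}_rG=({}^{\psi}_a\overline{\partial}_r^{\beta,\rho,\vartheta}g)(x,y)\,\rho^{-1}\,D\vartheta(x,y)\,e^{\sum_{k}\mu_k(x)}.
\]

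The final step is bookkeeping. Since $e^{\sum_{k}(\mu_k(x)+\lambda_k(x))}$ is $\mathbb R$-valued, it commutes with every quaternionic factor, so it can be absorbed into the area form to produce ${}^{\psi}\eta_x^{\mu,\lambda}$ on the boundary and into the volume form to produce $dx^{\mu,\lambda}$. With the notation $C_{\varphi,\sigma}=D\varphi\,\sigma^{-1}$ and $C_{r,\vartheta,\rho}=\rho^{-1}D\vartheta$ the identity collapses to the statement. The only delicate point, which I expect to be the main obstacle, is the non-commutative ordering: one must keep track of where the quaternionic quantities $\sigma^{-1}$, $\rho^{-1}$, ${}^{\psi}_a\overline{\partial}^{\alpha,\sigma,\varphi}f$ and ${}^{\psi}_a\overline{\partial}_r^{\beta,\rho,\vartheta}g$ sit relative to the integrated factors $({}_a I^{1-\alpha,\sigma,\varphi}f)$ and $({}_a I^{1-\beta,\rho,\vartheta}g)$; once the scalar exponentials have been pulled out, this ordering is dictated exactly by the placements appearing on the right-hand side of the identity asserted in the Proposition, so it suffices to check that Proposition \ref{pro2} is applied on the correct (left or right) side in each term.
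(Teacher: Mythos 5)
Your proposal is correct and follows essentially the same route as the paper: choose $\lambda_k,\mu_k$ satisfying the hypotheses of Proposition \ref{pro2}, apply the integral Stokes theorem \eqref{StokesT} to the exponentially weighted functions $e^{\sum_k\mu_k}({}_a I^{1-\beta,\rho,\vartheta}g)$ and $e^{\sum_k\lambda_k}({}_a I^{1-\alpha,\sigma,\varphi}f)$, and use Proposition \ref{pro2} (left and right versions) to convert the classical $\psi$-Cauchy--Riemann derivatives into the fractional proportional ones, absorbing the real-valued exponential into ${}^\psi\eta_x^{\mu,\lambda}$ and $dx^{\mu,\lambda}$. Your attention to the non-commutative ordering of $\sigma^{-1}$, $\rho^{-1}$ and $D\varphi$, $D\vartheta$ is exactly the bookkeeping the paper performs implicitly.
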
 
\begin{proof}
Let $\lambda_0, \dots, \lambda_2,\mu_0, \dots, \mu_2  \in C^1(J_a^b , \mathbb R)$ in order to  
\begin{align*} 
\sum_{k=0}^2 \psi_k \frac{\partial {\lambda_k}}{\partial x_k} (x) = & D\varphi(x,y)\sigma^{-1} (1-\sigma)\\ 
\sum_{k=0}^2 \psi_k \frac{\partial {\mu_k}}{\partial x_k} (x) = & D\vartheta(x,y) (1-\rho)\rho^{-1}.\end{align*}
Using  these identities, Proposition \ref{pro2} and applying Stokes formula \eqref{StokesT} for the  functions
$$x\mapsto   e^{ \sum_{k=0}^2 \mu_k(x) }({}_a{I}^{1-\beta,\rho,\vartheta}  g )  (x,y), \quad 
 x \mapsto     e^{ \sum_{k=0}^2 \lambda_k(x)}       ({}_a{I}^{1-\alpha,\sigma, \varphi}  f )  (x,y) ,$$
    we obtain that
\begin{align*}
 & \int_{\partial J_a^b}   ({}_a{  I}^{1-\beta,\rho,\vartheta}  g )  (x,y)     e^{ \sum_{k=0}^3 \mu_k(x)+ \lambda_k(x)} \ {}^\psi\eta_x     ({}_a{  I}^{1-\alpha,\sigma, \varphi}  f )  (x,y) \\
  =  
   &   \int_{J_a^b } \left[   ({}_a{ I}^{1-\beta,\rho,\vartheta}  g )  (x,y)   
   D\varphi(x,y) \sigma ^{-1}({}^{\psi}_a{\overline{\partial}}^{\alpha,\sigma,\varphi}  f )(x,y)   \right. \\
& \left.  +  
({}^{\psi}_a{\overline{\partial}}_r^{\beta,\rho,\vartheta}  g )(x,q)  \rho^{-1} 
 D\vartheta(x,y)      
  ({}_a{ I}^{1-\alpha,\sigma,\varphi}  f )  (x,q)
\right]   e^{\sum_{k=0}^3   \mu_k(x)+\lambda_k(x) } dx.
\end{align*}
\end{proof}

\begin{proposition}\label{BPTF}
Let $f,g \in AC^1(J_a^b,\mathcal A)$ so that the mappings 
$$x\mapsto ({}_a{ I}^{1-\alpha,\sigma, \varphi}  f )(x,y), \quad  x\mapsto ({}_a{  I}^{1-\beta,\rho, \vartheta}  g )(x,y)$$ 
belong to $C^1(\overline{J_a^b}, \mathcal A)$.  Then 
{\begin{align*}  
 &  \int_{\partial J_a^b} K_{\psi}^{\alpha, \sigma, \varphi}(\zeta,x)
 \ {}^{\psi}\eta_{\zeta}  
  ({}_a{  I}^{1-\alpha,\sigma, \varphi}  f )  (\zeta,y)   
  +
  ({}_a{ I}^{1-\beta,\rho, \vartheta}  g )  (\zeta,y) 
		{}^{\psi}\eta_{\zeta}    K_{\psi}^{\beta, \rho, \vartheta}(\zeta,x)
  \\
  & - \sum_{i=0}^2 {}_{a_i}D^{1-\alpha_i,\sigma_i, \varphi_i} \left[
\int_{J_a^b} \mathcal H_{\psi}^{\varphi, \sigma} (\zeta,x)   
({}^{\psi}_a{\overline{\partial}}^{\alpha,\sigma,\varphi}  f )(\zeta,y)    
   d\zeta  \right]  \\
   & - \sum_{i=0}^2 {}_{a_i}D^{1-\beta_i,\rho_i, \vartheta_i} \left[
\int_{J_a^b} ({}^{\psi}_a{\overline{\partial}}_r^{\beta,\rho,\vartheta}  g )(\zeta,y) \mathcal H_{r,\psi}^{\vartheta,\rho}(\zeta,x)    d\zeta  \right]  \\
		=  &  \left\{ 
		\begin{array}{ll}
		 \sum_{i=0}^2 (f+g)(y_0,\dots, x_i,\dots, y_2) 
		+ R^{\alpha, \sigma, \varphi}(f,x,y)   
		  + R^{\beta, \rho, \vartheta}(g,x,y)   , &  x\in J_a^b, 
		 \\ 0 , &  x\in \mathcal A\setminus\overline{J_a^b}.                     
\end{array} \right. 
\end{align*}}

Here ${}_{a_i}D^{1-\alpha_i,\sigma_i,\varphi_i}$ denotes the fractional proportional partial derivative in coordinate $x_i$ with respect to $\varphi_i$ for $i=0,1,2$ and 
\begin{align*}
K_{\psi}^{\alpha, \sigma, \varphi}(\zeta,x) := & \sum_{i=0}^2
 {}_{a_i}D^{1-\alpha_i,\sigma_i, \varphi_i} 
 \left[ K_{\psi}(\zeta-x)  e^{ \sum_{k=0}^2(\lambda_k(\zeta)-
\lambda_k(x) )}\right],\\
\mathcal H_{\psi}^{\varphi, \sigma} (\zeta,x)  := &  K_{\psi} (\zeta-x)     
  e^{   \sum_{k=0}^2 (\lambda_k(\zeta) - \lambda_k(x) ) } 
	C(\varphi, \sigma)(\zeta), \\
\mathcal H_{r,\psi}^{\vartheta,\rho}(\zeta,x) := &	C_r(\vartheta, \rho)(\zeta)
    e^{   \sum_{k=0}^2 (\mu_k(\zeta) - \mu_k(x) ) }  
		K_{\psi} (\zeta-x) ,\\
	R^{\alpha, \sigma, \varphi}(f,x,y) = : &
 \sum_{i=0}^2  ({}_{a_i}{ I}^{1-\alpha_i,\sigma_i, \varphi_i}  f )(y_0, \dots, x_i, \dots, y_2) 
		  \left[ \sum_{j=0, j\neq i}^2 
			{}_{a_j}D^{1-\alpha_j,\sigma_j, \varphi_j}(1)  \right],
			\end{align*}
for all $x \in J_a^b$ and $\zeta \in \overline{J_a^b}$.  
 \end{proposition}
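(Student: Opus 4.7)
The plan is to reduce the statement to the classical quaternionic Borel--Pompieu formula \eqref{BP} applied to two suitable exponentially weighted fractional integrals, and then to recover $f$ and $g$ by applying, in each coordinate direction, the one-dimensional fractional proportional derivative and invoking the fundamental theorem \eqref{FundTheoFPW}.

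First I would choose $\mathbb R$-valued auxiliary functions $\lambda_k,\mu_k\in C^1(J_a^b,\mathbb R)$, $k=0,1,2$, as in Proposition~\ref{pro2}, so that $\sum_{k=0}^2 \psi_k\,\partial_k \lambda_k = D\varphi\,\sigma^{-1}(1-\sigma)$ and $\sum_{k=0}^2 \psi_k\,\partial_k \mu_k = D\vartheta\,\rho^{-1}(1-\rho)$ on $J_a^b$, and set
\[
F(x):=e^{\sum_k \lambda_k(x)}\,({}_a{I}^{1-\alpha,\sigma,\varphi} f)(x,y),\qquad G(x):=e^{\sum_k \mu_k(x)}\,({}_a{I}^{1-\beta,\rho,\vartheta} g)(x,y).
\]
Proposition~\ref{pro2} then gives ${}^{\psi}\overline{\partial}F = e^{\sum \lambda_k}\,D\varphi\,\sigma^{-1}\,({}^{\psi}_a\overline{\partial}^{\alpha,\sigma,\varphi}f)$ and ${}^{\psi}\overline{\partial}_r G = ({}^{\psi}_a\overline{\partial}_r^{\beta,\rho,\vartheta}g)\,\rho^{-1}\,D\vartheta\,e^{\sum \mu_k}$. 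Applying \eqref{BP} separately to the pair $(F,0)$ and to the pair $(0,G)$ yields two Borel--Pompieu identities whose right-hand sides equal $F(x)$ and $G(x)$ for $x\in J_a^b$ and vanish for $x\in\mathbb R^3\setminus\overline{J_a^b}$.

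Next, I would divide the $F$-identity by the real scalar $e^{\sum_k \lambda_k(x)}$ and the $G$-identity by $e^{\sum_k \mu_k(x)}$; pushing these scalars inside the boundary and volume integrals produces the difference weights $e^{\sum_k(\lambda_k(\tau)-\lambda_k(x))}$ and $e^{\sum_k(\mu_k(\tau)-\mu_k(x))}$, so that the right-hand sides become $({}_a I^{1-\alpha,\sigma,\varphi}f)(x,y)$ and $({}_a I^{1-\beta,\rho,\vartheta}g)(x,y)$, respectively. I then apply $\sum_{i=0}^2 {}_{a_i}D^{1-\alpha_i,\sigma_i,\varphi_i}$ (in the variable $x$) to the first identity and the analogous sum with $\beta,\rho,\vartheta$ to the second. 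Using the decomposition $({}_a I^{1-\alpha,\sigma,\varphi}f)(x,y)=\sum_i F_i(x_i)$ with $F_i(x_i):=({}_{a_i}I^{1-\alpha_i,\sigma_i,\varphi_i}f)(y_0,\dots,x_i,\dots,y_2)$, the diagonal terms $i=j$ reproduce $\sum_i f(y_0,\dots,x_i,\dots,y_2)$ via \eqref{FundTheoFPW}, while the off-diagonal terms $i\ne j$ factor as $F_j(x_j)\cdot {}_{a_i}D^{1-\alpha_i,\sigma_i,\varphi_i}(1)$ (since $F_j$ is constant in $x_i$); summing these yields exactly $R^{\alpha,\sigma,\varphi}(f,x,y)$, and similarly $R^{\beta,\rho,\vartheta}(g,x,y)$ arises from the $g$-identity.

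On the right-hand side of each identity, the operator $\sum_i{}_{a_i}D^{1-\alpha_i,\sigma_i,\varphi_i}$ acts only on the $x$-variable, so it commutes with integration over the boundary parameter $\tau$ and is absorbed into the kernels $K_\psi^{\alpha,\sigma,\varphi}(\tau,x)$ and $K_\psi^{\beta,\rho,\vartheta}(\tau,x)$ defined in the statement; for the volume integrals the derivative is kept outside, yielding the $\mathcal H_\psi^{\varphi,\sigma}$ and $\mathcal H_{r,\psi}^{\vartheta,\rho}$ blocks. Adding the two identities produces the first case of the formula on $J_a^b$, while for $x\in\mathcal A\setminus\overline{J_a^b}$ the right-hand side of \eqref{BP} is already zero so the fractional derivatives of it vanish, giving the second case. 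I expect the main obstacle to be the careful bookkeeping of the non-commutative ordering, which forces the left--right asymmetry between the $f$- and $g$-blocks, together with justifying the interchange of the one-dimensional fractional derivative with the boundary integrals, and correctly identifying the off-diagonal contributions as the nontrivial remainders $R^{\alpha,\sigma,\varphi}(f,x,y)$ and $R^{\beta,\rho,\vartheta}(g,x,y)$ rather than being absorbed by the fundamental theorem.
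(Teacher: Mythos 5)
Your proposal follows essentially the same route as the paper's proof: apply the classical Borel--Pompieu formula \eqref{BP} to the exponentially weighted fractional integrals from Proposition \ref{pro2}, strip off the weight $e^{\sum_k\lambda_k(x)}$ (resp. $e^{\sum_k\mu_k(x)}$) to produce the difference exponents, then apply $\sum_i {}_{a_i}D^{1-\alpha_i,\sigma_i,\varphi_i}$ and use the fundamental theorem \eqref{FundTheoFPW} to recover $f$ and $g$ plus the remainders $R^{\alpha,\sigma,\varphi}$ and $R^{\beta,\rho,\vartheta}$. Your diagonal/off-diagonal bookkeeping makes explicit what the paper compresses into ``Leibniz rule and the following identity,'' and you correctly read the paper's ``multiplication by $e^{\sum_k\lambda_k(x)}$'' as division, so the argument is sound.
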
 
\begin{proof}
Using  \eqref{BP},  the function  
$x \mapsto e^{ \sum_{k=0}^3\lambda_k(x)} 
 ({}_a{  I}^{1-\alpha,\sigma, \varphi}  f )  (x,y)$ and 
Proposition \ref{pro2}
we obtain  that  
\begin{align*}  
 &  \int_{\partial J_a^b}
K_{\psi}(\zeta-x) \ {}^{\psi}\eta_{\zeta} \   
 e^{ \sum_{k=0}^3 \lambda_k(\zeta)} 
 ({}_a{ I}^{1-\alpha,\sigma, \varphi}  f )  (\zeta,y)   
  \\
  & - 
\int_{J_a^b}  K_{\psi} (\zeta-x)     
  e^{   \sum_{k=0}^3 \lambda_k(\zeta)  } 
	D\varphi (\zeta,y) \sigma^{-1}
	({}^{\psi}_a{\overline{\partial}}^{\alpha,\sigma,\varphi}  f )(\zeta,y) 
   d\zeta   \nonumber \\
		=  & 
		 \left\{ 
		 \begin{array}{ll}  
		    e^{ \sum_{k=0}^3 \lambda_k(x  )} 
				({}_a{ I}^{1-\alpha,\sigma, \varphi}  f )  (x,y) 
						, &  x\in J_a^b,  \\ 
		  0 , &  x\in \mathcal A\setminus\overline{J_a^b}.                     
\end{array} \right. 
\end{align*} 
Multiplication by the factor $e^{ \sum_{k=0}^3 \lambda_k(x)}$ and the action of the fractional differential operator    
 $\sum_{i=0}^3 {}_{a_i}D^{1-\alpha_i,\sigma_i, \varphi_i}$, in terms of real components of $x$, on both sides yields 
\begin{align*}  
 & \sum_{i=0}^3 {}_{a_i}D^{1-\alpha_i,\sigma_i, \varphi_i}
\left[ \int_{\partial J_a^b}
K_{\psi}(\zeta-x)    
 e^{ \sum_{k=0}^3 (\lambda_k(\zeta)- \lambda_k( x))} 
 \ {}^{\psi}\eta_{\zeta} \ ({}_a{ I}^{1-\alpha,\sigma, \varphi}  f )  (\zeta,y)  \right] 
  \\
  & - \sum_{i=0}^3 {}_{a_i}D^{1-\alpha_i,\sigma_i, \varphi_i}\left[
\int_{J_a^b}  K_{\psi} (\zeta-x)     
  e^{   \sum_{k=0}^3( \lambda_k(\zeta) -\lambda_k(x))  } 
	D\varphi (\zeta,y) \sigma^{-1}
	({}^{\psi}_a{\overline{\partial}}^{\alpha,\sigma,\varphi}  f )(\zeta,y) 
   d\zeta \right]  \nonumber \\
		=  & 
		 \left\{ 
		 \begin{array}{ll}  
		    			\sum_{i=0}^3 {}_{a_i}D^{1-\alpha_i,\sigma_i, \varphi_i} \circ 
							({}_a{ I}^{1-\alpha,\sigma, \varphi}  f )  (x,y) 
						, &  x\in J_a^b,  \\ 
		  0 , &  x\in \mathcal A\setminus\overline{J_a^b}.                     
\end{array} \right. 
\end{align*} 
Leibniz rule and the following identity 
\[\sum_{i=0}^3 {}_{a_i}D^{1-\alpha_i,\sigma_i, \varphi_i} \circ 
	({}_a{ I}^{1-\alpha,\sigma, \varphi}  f )  (x,y) 
=		 \sum_{i=0}^3 f(y_0,\dots, x_i,\dots, y_2) 
+ R^{\alpha, \sigma, \varphi}(f,x,q),\]
obtained from \eqref{FundTheoFPW}, give us a part of the Borel-Pompeiu formula. The terms related with $g$ and $({}^{\psi}_a{\overline{\partial}}_r^{\beta,\rho,\vartheta}g)$ follow from similar computations as above.
\end{proof}

We are now in a position  to show the Cauchy theorem and formula for functions in ${}^{\psi}_a{{\mathcal M}}^{\alpha,\sigma, \varphi} (J_a^b)$ and  ${}^{\psi}_a{{\mathcal M}}_r^{\beta,\rho, \vartheta} (J_a^b)$. 
\begin{corollary}  If $f\in {}^{\psi}_a{{\mathcal M}}^{\alpha,\sigma, \varphi} (J_a^b)$ and $g\in {}^{\psi}_a{{\mathcal M}}_r^{\beta,\rho, \vartheta} (J_a^b)$ then 
\begin{align*}
   \int_{\partial J_a^b}   ({}_a{  I}^{1-\beta,\rho,\vartheta}  g )  (x,y) 
    \ {}^\psi\eta_x^{\mu,\lambda}  \  ({}_a{ I}^{1-\alpha,\sigma, \varphi}  f ) 
		(x,y)  
  =  0 \end{align*}  
and   
  \begin{align*}  
 &  \int_{\partial J_a^b} K_{\psi}^{\alpha, \sigma, \varphi}(\zeta,x)
 \ {}^{\psi}\eta_{\zeta}  
  ({}_a{  I}^{1-\alpha,\sigma, \varphi}  f )  (\zeta,y)   
  +
  ({}_a{ I}^{1-\beta,\rho, \vartheta}  g )  (\zeta,y) 
		{}^{\psi}\eta_{\zeta}    K_{\psi}^{\beta, \rho, \vartheta}(\zeta,x)
   \\
		=&  	 \sum_{i=0}^2 (f+g)(y_0,\dots, x_i,\dots, y_2) 
		+ R^{\alpha, \sigma, \varphi}(f,x,y)   		  + R^{\beta, \rho, \vartheta}(g,x,y)   , 
\end{align*} 
for all $x\in   J_a^b $.   
\end{corollary}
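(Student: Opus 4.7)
The plan is to derive both assertions as direct specializations of Propositions \ref{STF} and \ref{BPTF} under the hyperholomorphicity assumptions on $f$ and $g$.

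First I would verify that the hypotheses of the two propositions are satisfied. Since $f\in {}^{\psi}_a{\mathcal M}^{\alpha,\sigma,\varphi}(J_a^b)$ and $g\in {}^{\psi}_a{\mathcal M}_r^{\beta,\rho,\vartheta}(J_a^b)$, by the very definition of these classes the maps $x\mapsto ({}_a{I}^{1-\alpha,\sigma,\varphi}f)(x,y)$ and $x\mapsto ({}_a{I}^{1-\beta,\rho,\vartheta}g)(x,y)$ belong to $C^1(\overline{J_a^b},\mathcal A)$, which is precisely the regularity required to invoke both propositions.

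For the first identity, I would apply Proposition \ref{STF} directly. The right-hand side there is an integral over $J_a^b$ of a sum of two terms: one containing the factor $({}^{\psi}_a\overline{\partial}^{\alpha,\sigma,\varphi}f)(x,y)$, the other the factor $({}^{\psi}_a\overline{\partial}_r^{\beta,\rho,\vartheta}g)(x,y)$. By the defining property of $ {}^{\psi}_a{\mathcal M}^{\alpha,\sigma,\varphi}(J_a^b)$ and $ {}^{\psi}_a{\mathcal M}_r^{\beta,\rho,\vartheta}(J_a^b)$, both factors vanish identically on $J_a^b$, so the volume integral is zero and only the boundary integral remains, yielding the claimed Cauchy-type identity.

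For the second identity, I would apply Proposition \ref{BPTF} with $x\in J_a^b$. The two volume integrals on the left-hand side each carry the fractional differential operator $\sum_{i=0}^2 {}_{a_i}D^{1-\alpha_i,\sigma_i,\varphi_i}$ (respectively $\sum_{i=0}^2 {}_{a_i}D^{1-\beta_i,\rho_i,\vartheta_i}$) acting on an integral whose integrand contains the factors $({}^{\psi}_a\overline{\partial}^{\alpha,\sigma,\varphi}f)(\zeta,y)$ and $({}^{\psi}_a\overline{\partial}_r^{\beta,\rho,\vartheta}g)(\zeta,y)$. These factors vanish on $J_a^b$ by the hyperholomorphicity of $f$ and $g$, so the volume integrals themselves vanish and the outer fractional derivatives applied to zero remain zero. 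What is left on the left-hand side is exactly the boundary Cauchy-kernel integral, and on the right-hand side the point values plus the remainder terms $R^{\alpha,\sigma,\varphi}(f,x,y)$ and $R^{\beta,\rho,\vartheta}(g,x,y)$, as stated.

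No real obstacle appears; the corollary is essentially a specialization. The only point requiring a little care is purely bookkeeping: one must observe that the hypotheses imply that the vanishing of $({}^{\psi}_a\overline{\partial}^{\alpha,\sigma,\varphi}f)$ and $({}^{\psi}_a\overline{\partial}_r^{\beta,\rho,\vartheta}g)$ as $\mathcal A$-valued functions entails the vanishing of the full integrands in Propositions \ref{STF} and \ref{BPTF}, which is immediate since the other factors ($D\varphi\,\sigma^{-1}$, $\rho^{-1}D\vartheta$, exponentials and kernels) are pointwise multipliers, not differential operators acting on these vanishing quantities.
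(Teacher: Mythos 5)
Your proposal is correct and follows essentially the same route as the paper: both identities are obtained by specializing Proposition \ref{STF} and Proposition \ref{BPTF} and observing that the volume integrals vanish because $({}^{\psi}_a\overline{\partial}^{\alpha,\sigma,\varphi}f)$ and $({}^{\psi}_a\overline{\partial}_r^{\beta,\rho,\vartheta}g)$ are identically zero on $J_a^b$. Your write-up is in fact more explicit than the paper's one-line proof, and the bookkeeping remark about the remaining factors being pointwise multipliers is a correct and worthwhile clarification.
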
 
\begin{proof}
The proof strongly depended on the usage of Stokes-type formula induced by ${}^{\psi}_a{ \overline{\partial }}^{\alpha,\sigma, \varphi}$ and   
 ${}^{\psi}_a{ \overline{\partial}}_r^{\beta,\rho, \vartheta}$ together with Proposition \ref{STF}.
\end{proof}

\section*{Statements and Declarations}
\subsection*{Funding} This work was partially supported by Instituto Polit\'ecnico Nacional (grant numbers SIP20232103, SIP20230312) and CONACYT.
\subsection*{Competing Interests} The authors declare that they have no competing interests regarding the publication of this paper.
\subsection*{Author contributions} All authors contributed equally to the study, read and approved the final version of the submitted manuscript.
\subsection*{Availability of data and material} Not applicable
\subsection*{Code availability} Not applicable
\subsection*{ORCID}
\noindent
Jos\'e Oscar Gonz\'alez-Cervantes: https://orcid.org/0000-0003-4835-5436\\
Juan Bory-Reyes: https://orcid.org/0000-0002-7004-1794

\end{document}